\newcommand{\authormark}[1]{\textsuperscript{\,#1}}
\newcommand{\showmark}[1]{
  \hspace*{-1em}\makebox[1em][r]{\authormark{#1}\,}
  \ignorespaces}
\newcommand{\defi}[1]{\emph{#1}}
\DeclareMathOperator{\bigoh}{\mathrm{O}}
\newtheorem{theorem}{Theorem}
\newtheorem{conjecture}[theorem]{Conjecture}
\newtheorem{lemma}[theorem]{Lemma}
\renewcommand{\@biblabel}[1]{\textcolor{gray}{[\,}#1\textcolor{gray}{\,]}}
  \renewcommand\@openbib@code{
    \setlength\labelwidth{2cm}
    \setlength{\itemindent}{0cm}
    \setlength{\leftmargin}{0cm}
    \setlength\labelsep{.8em}
    }
\tikzset{emp/.style={double distance = 0.3ex}}
\tikzset{oriented/.style={->,shorten >= 1.5pt}}
\tikzset{highlight/.style={orange!75!white,line width=6pt,line cap=round}}
\setlist[enumerate]{label=\textit{(\roman*)},ref=\textit{(\roman*)}}
\title{Separating the edges of a graph \\ by cycles and by subdivisions of~\(K_4\)}
\author{F\'abio Botler\authormark{1}
\and T\'assio Naia\authormark{2}}
\newcommand{\calk}{\mathcal{K}}
\newcommand{\cals}{\mathcal{S}}
\begin{document}

\setlength\linenumbersep{1.7cm}%\linenumbers

\maketitle

\begin{center}
\footnotesize
\begin{minipage}{.45\textwidth}
\begin{center}
\showmark{1}
Departamento de Ci\^encia da Computa\c c\~ao \\
Instituto de Matem\'atica e Estat\'\i stica \\
Universidade de S\~ao Paulo, Brasil\\
E-mail: \texttt{fbotler@ime.usp.br}
\end{center}
\end{minipage}
\begin{minipage}{.45\textwidth}
\begin{center}
\showmark{2}
{Centre de Recerca Matem\`atica\\
  Belaterra, Spain\\}
E-mail: \texttt{tnaia@member.fsf.org}
\end{center}
\end{minipage}
\end{center}

\newcommand{\RR}{82}
\newcommand{\tncomm}[1]{{\color{red}#1}}
\newcommand{\fbcomm}[1]{{\color{blue}#1}}

\begin{abstract}
A \emph{separating system} of a graph~\(G\)
is a family \(\mathcal{S}\) of subgraphs of \(G\) for which the following holds:
for all distinct edges $e$ and~$f$ of~$G$, there exists an element in $\mathcal{S}$
that contains \(e\) but not~\(f\).
Recently, it has been shown that every graph of order~\(n\)
  admits a separating system consisting of \(19n\) paths
  [Bonamy, Botler, Dross, Naia, Skokan, \textsl{Separating the Edges of a Graph by a Linear
Number of Paths}, Adv. Comb., October 2023],
improving the previous almost linear bound of \(\bigoh(n\log^\star n)\)
[S.~Letzter, \textsl{Separating paths systems of almost linear size}, Trans.\ Amer.\ Math.\ Soc., to appear],
and settling conjectures posed
by~Balogh, Csaba, Martin, and Pluh\'ar and
by~Falgas-Ravry, Kittipassorn, Kor\'andi, Letzter, and Narayanan.

We investigate a natural generalization of these results to subdivisions of cliques,
showing that every graph admits both a
separating system consisting of \(41n\) edges and cycles,
and a separating system consisting of \(\RR n\) edges and subdivisions of~\(K_4\).
\end{abstract}

Consider a family \(\cals\) of subsets of a set \(X\).
Given \(e,f\in X\), we say that an element \(S\in\cals\) \emph{separates} \(e\) from \(f\)
if \(e\in S\) and \(f\notin S\).
Furthermore, we say that \(\cals\) \emph{weakly separates} \(X\)
if for every pair of elements \(e,f \in X\) there is an element \(S\in\cals\)
that either separates \(e\) from \(f\) or that separates \(f\) from \(e\);
and we say that \(\cals\) \emph{strongly separates}~\(X\)
if for every pair of elements \(e,f \in X\) there is an element \(S\in\cals\)
that separates \(e\) from \(f\), and an element that separates \(f\) from~\(e\).
Such a family \(\cals\) is then called a (weak or strong, respectively) \emph{separating system} of \(X\).
The study of separating systems dates back to the 1960s~\cite{KATONA1966174,renyi1961random,spencer1970minimal},
and gained substantial attention in the last years (see, e.g.,~\cite{arrepol2023separating,biniaz2023separating,BOLLOBAS20071068}).
Our focus is a variation posed in 2013 by Katona~(see~\cite{balogh2016path,falgas2013separating})
in which one seeks to separate the \emph{edge} set of a given graph by a small collection of~paths.
In particular, special attention has been given to the case of complete graphs~\cite{FernandesMotaSanhuezaMatamala24,kontogeorgiou2024exact,WICKES2024113784}.

For our purposes, a \defi{separating path system} (resp.\ \defi{separating cycle system})
of a graph~$G$ is a collection~$\cals$ of paths (resp.~edges and~cycles) in~$G$
such that for all distinct edges $(e,f)\in E(G)\times E(G)$ there exists $S\in\cals$
that separates \(e\) from~\(f\).
Note that this definition fits into the category of \emph{strong} separating systems.
Also note that, in the definition of cycle separating system, one is allowed to use isolated edges.
This is required in order to separate graphs that contain bridges (i.e., edges that do not lie in any cycle).

Inspired by Katona's question,
Falgas-Ravry, Kittipassorn, Kor\'andi, Letzter, and Narayanan~\cite{falgas2013separating}
conjectured that every graph on $n$~vertices admits a weak separating system size~$\bigoh(n)$
consisting solely of paths, while verifying it for a number of cases.
Their conjecture was later strengthened
by~Balogh, Csaba, Martin, and Pluh\'ar~\cite{balogh2016path}.

\begin{conjecture}[\cite{balogh2016path,falgas2013separating}]\label{cj:path-sep}
  Every graph of order $n$ admits a path separating system of size~$\bigoh(n)$.
\end{conjecture}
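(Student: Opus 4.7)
My plan is to proceed by induction on the number of vertices, combining a peeling strategy with an absorption-style separation argument; the target is a separating path system of size at most $Cn$ for some absolute constant $C$.

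\emph{Phase 1 (peeling low-degree vertices).} If $G$ contains a vertex $v$ of degree at most some constant $c$, apply induction to $G-v$; then separate the edges at $v$ from the rest by adding $O(c)$ short paths through $v$, each capturing a distinct subset of those edges. Iterating this step across all vertices of bounded degree contributes $O(n)$ paths in total. This reduces the problem to the case where $G$ has minimum degree greater than $c$.

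\emph{Phase 2 (backbone decomposition).} For the residual graph $G'$ of large minimum degree, find an edge decomposition (or near-decomposition) of $G'$ into $O(n)$ paths $P_1,\dots,P_k$, each of length $O(n)$; this is plausible either by Hamiltonian-type arguments (Dirac/P\'osa/Chv\'atal for dense $G'$) or by Euler-trail style trail decompositions exploiting the bounded number of odd-degree vertices. Include these ``backbones'' in $\cals$. Two edges lying in different backbones are already separated by whichever backbone contains exactly one of them.

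\emph{Phase 3 (auxiliary paths).} To separate edges inside a common backbone $P_i$, use auxiliary paths whose intersection with $P_i$ is a specified dyadic interval of edges. A single backbone of length $\ell$ naively demands $\Theta(\log\ell)$ auxiliaries, giving a total of $O(n\log n)$. The route to linearity is to have each auxiliary path serve several backbones simultaneously: traverse a prescribed interval of $P_i$, exit through the dense ambient graph, enter $P_j$, traverse another prescribed interval, and so on, so that a single path contributes a separating bit to many pairs at once.

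The main obstacle is Phase~3: making the reuse work uniformly requires routing paths through $G'$ with essentially arbitrary prescribed endpoints while avoiding a specified small set of edges, which is precisely where the lower bound on minimum degree becomes crucial. I expect the technical heart of the argument to be a combination of (a) a probabilistic construction, in which random concatenations of short routing paths separate a typical pair with high probability, followed by an alteration step to fix the remaining bad pairs, and (b) a P\'osa-style rotation–extension absorber that patches prescribed intervals together. Once such a routing tool is in place, a union bound over the $O(n^2)$ pairs (binned by which backbones they occupy) should yield the desired collection of $O(n)$ auxiliary paths.
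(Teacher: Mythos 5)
There is a genuine gap: your Phase~3 is the entire difficulty of the problem, and you leave it unresolved. The dyadic-interval scheme you start from costs $\Theta(\log\ell)$ auxiliary paths per backbone, hence $\bigoh(n\log n)$ overall, and the proposed fix --- reusing a single auxiliary path across many backbones via probabilistic routing and absorbers --- is precisely the step that is not known to work; iterated versions of this reuse idea are what produced the earlier superlinear bound $\bigoh(n\log^\star n)$ of Letzter, not a linear one. A union bound over the $\bigoh(n^2)$ pairs also does not obviously close, because each pair needs a path that contains one edge and \emph{avoids} the other, and avoidance is hard to guarantee for long random concatenations through a dense ambient graph. Phase~2 is also shaky as stated: after peeling vertices of degree at most a constant $c$, the residual graph has minimum degree only $c+1$, which is far too weak for Dirac/P\'osa/Chv\'atal-type Hamiltonicity, and Euler-type arguments produce trails rather than paths with a controllable vertex ordering to serve as backbones.

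For comparison, the proof cited in the paper (Bonamy--Botler--Dross--Naia--Skokan) avoids binary encoding altogether. It uses P\'osa rotation--extension (Lemma~\ref{lemma:brandt}) to peel off a set $S$ with $|N(S)|\le 2|S|$ such that $S\cup N(S)$ is spanned by a single derived path, reducing by induction to separating the edges incident to $S$ inside a graph containing a Hamilton path $v_1\cdots v_h$. It then takes the $\bigoh(h)$ matchings $M_k=\{v_iv_j: i+j=k\}$ and $N_k=\{v_iv_j: i+2j=k\}$; each is covered by \emph{one} path obtained from the Hamilton path by shortcuts, and two distinct edges can never agree on both indices $i+j$ and $i+2j$, so some $M_k$ or $N_k$ contains exactly one of them. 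This ``arithmetic matching'' device is the idea missing from your plan: it replaces $\log$-many separating bits per pair by a constant number of linear families of matchings, each realizable as a single path. Without it, or a worked-out substitute for Phase~3, your argument does not reach $\bigoh(n)$.
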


In 2023, Bonamy, Dross, Skokan and the authors confirmed Conjecture~\ref{cj:path-sep},
proving that every graph on \(n\)~vertices admits a separating path system of size at most
\(19n\)~\cite{BonamyBotlerDrossNaiaSkokan23}.
This improved the almost linear bound of \(\bigoh(n\log^\star n)\) found by Letzter
in 2022~\cite{letzter2022separating}.
It is then natural to ask whether
every graph also admits a separating cycle system of size~$\bigoh(n)$.
This question was independently posed by Girão and Pavez-Sign{\'e}\footnote{Personal communication.},
and in this paper we answer it affirmatively (see Section~\ref{sec:cycles}).

\begin{theorem}\label{thm:cycles}
  Every graph on~$n$ vertices admits a separating cycle system of size~\(41n\).
\end{theorem}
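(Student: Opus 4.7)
My strategy is to reduce to the 2-edge-connected case via a block decomposition and then convert the path separating system of~\cite{BonamyBotlerDrossNaiaSkokan23} into a cycle separating system, block by block. Decompose $G$ into blocks $B_1,\ldots,B_b$ (maximal 2-edge-connected subgraphs and bridges). Every bridge of $G$ is forced into $\cals$ as an isolated edge (since bridges lie on no cycle), contributing at most $n-1$ elements; these already separate every pair containing a bridge and every pair of edges lying in distinct blocks. It therefore remains to separate, inside each 2-edge-connected block, every pair of its edges. Using the standard block-tree identity $\sum_i |V(B_i)|\le 2n-1$, it is enough to build, for each 2-edge-connected graph $H$ on $n'$ vertices, a separating cycle system for $E(H)$ of size at most $20 n'$; the overall total is then at most $20(2n-1)+(n-1)\le 41n$.

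\medskip

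\noindent For a 2-edge-connected $H$ on $n'$ vertices I would start from a separating path system $\calp=\{P_1,\ldots,P_m\}$ with $m\le 19 n'$, given by~\cite{BonamyBotlerDrossNaiaSkokan23}, and close each $P_i$ into a cycle. Writing $u_i,v_i$ for the endpoints of $P_i$, Menger's theorem applied in the 2-edge-connected graph $H$ yields a $u_i$-$v_i$ path $Q_i$ in $H\setminus E(P_i)$, and we set $C_i:=P_i\cup Q_i$. Each $C_i$ contains every edge of $P_i$; the trouble is that $Q_i$ may contain an edge $f\notin P_i$ that $P_i$ was meant to separate from some $e\in P_i$, destroying that separation. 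A natural remedy is to take \emph{two} closings per path: pick $Q_i^1, Q_i^2$ edge-disjoint in $H\setminus E(P_i)$ whenever possible, and include both cycles $C_i^j:=P_i\cup Q_i^j$. Any offending edge $f\notin P_i$ lies in at most one $Q_i^j$, so the other cycle still separates $e$ from~$f$.

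\medskip

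\noindent\textbf{Main obstacle.} The hard part is quantitative. The doubling scheme above costs $38 n'$ cycles per block, well above the target of $20 n'$. To close this gap I expect to have to open the construction of~\cite{BonamyBotlerDrossNaiaSkokan23}, whose paths are produced along a DFS tree via a small number of structured operations, and show that the associated closings can be chosen in a coordinated way so that all of them together add only $\bigoh(n')$ extra elements rather than a duplicate of the entire path system. If that fails, the fallback is to redo the merging machinery of~\cite{BonamyBotlerDrossNaiaSkokan23} directly with cycles replacing paths at every step, tracking constants carefully. Blocks that are 2-edge-connected but contain 2-edge-cuts, where the two edge-disjoint closings of a chosen path need not coexist, will require additional care, perhaps by splitting along such cuts into smaller 3-edge-connected pieces and separating the few ``cut edges'' explicitly as isolated edges.
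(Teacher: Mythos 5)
Your approach is genuinely different from the paper's, but it has a fatal gap at its core: the claim that ``Menger's theorem applied in the 2-edge-connected graph $H$ yields a $u_i$--$v_i$ path $Q_i$ in $H\setminus E(P_i)$'' is false. Menger guarantees two edge-disjoint $u_i$--$v_i$ paths in $H$, but both of them may use edges of $P_i$. Concretely, let $H=K_4-uv$ (which is $2$-connected) and let $P=u\,a\,b\,v$ be the Hamilton path between the two nonadjacent vertices; then $H\setminus E(P)$ has edge set $\{av,bu\}$ and $u,v$ lie in different components, so no closing path exists. Worse, even when some $Q_i$ exists, $P_i\cup Q_i$ is a cycle only if $Q_i$ is \emph{internally vertex-disjoint} from $P_i$, and this can be impossible in principle: if $P_i$ is a Hamilton path of a $2$-connected non-Hamiltonian graph (e.g.\ $x\,a\,y\,b\,z$ in $K_{2,3}$), any cycle containing all edges of $P_i$ would be a Hamilton cycle, which does not exist. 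Since the paths produced in~\cite{BonamyBotlerDrossNaiaSkokan23} are built from Hamilton-path-like structures, such long paths genuinely occur, so the strategy of closing the given path system into cycles cannot work as a black-box post-processing step. Decomposing $P_i\cup Q_i$ into cycles instead does not help, because splitting $E(P_i)$ across several cycles destroys exactly the separations $P_i$ was providing. On top of this you yourself flag the unresolved quantitative gap ($38n'$ versus the $20n'$ you need with your block-tree bound), and the side claim that the isolated bridges ``separate every pair of edges lying in distinct blocks'' is not right (a singleton $\{g\}$ separates $e$ from $f$ only when $g=e$), though that particular point is harmless since the per-block systems would cover their own blocks.

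For contrast, the paper does not post-process a path system at all. It reruns the inductive scheme of~\cite{BonamyBotlerDrossNaiaSkokan23}: P\'osa rotation--extension produces a set $S$ with $|N(S)|\le 2|S|$ together with a cycle (or edge) $C$ containing $S\cup N(S)$ \emph{directly}, so no closing step is ever needed; the graph $H$ of edges meeting $S$ is covered by Pyber's theorem (at most $|V(H)|-1$ cycles and edges), and the remaining edges are partitioned into $\bigoh(|V(H)|)$ structured matchings, each of which is absorbed into a single cycle obtained from $C$ by local rerouting. If you want to salvage your route, the lesson is that the cycle structure must be built into the inductive construction from the start rather than grafted onto an arbitrary path system afterwards.
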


In fact, we are interested in a more general setting in which
elements of the separating system are either edges or subdivisions of a given graph,
which we now make precise.
We say that \(H^*\) is a \emph{subdivision} of \(H\) if $H$ can be obtained from~$H^*$
by repeatedly deleting a vertex of degree~\(2\) and adding a new edge joining its neighbors.
Let \(H\) and \(G\) be graphs, and let \(\mathcal{F}\) be a family of subgraphs of \(G\)
such that every element of \(\mathcal{F}\) is either an edge or a subdivision of \(H\).
We say that \(\mathcal{F}\) is an \emph{\(H\)-cover} of~\(G\) if \(E(G) = \bigcup_{H'\in\mathcal{F}} E(H')\);
and we say that \(\mathcal{F}\) is an \emph{\(H\)-separating system} of \(G\)
if for all distinct edges \((e,f)\in E(G)\times E(G)\) there is an element \(H'\in \mathcal{F}\)
that separates \(e\) from \(f\).
It is not hard to see that every \(H\)-separating system of \(G\) is an \(H\)-cover of~\(G\),
and that a separating path (resp.~cycle) system is a $K_2$-separating (resp.~$K_3$-separating) system.
Therefore,
the results mentioned above say that every graph on \(n\)~vertices
admits a \(K_2\)- and a \(K_3\)-separating system of size~\(O(n)\).
Motivated by Conjecture~\ref{cj:path-sep}, we propose the following more general
edge separation conjecture.

\begin{conjecture}\label{conj:edge-separation}
	For every graph \(H\) there is a constant \(C = C(H)\) for which every graph on \(n\) vertices
	admits an \(H\)-separating system of size at most \(C \cdot n\).
\end{conjecture}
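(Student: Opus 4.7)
The plan is to attack Conjecture~\ref{conj:edge-separation} by induction on $|E(H)|$, with base case $H=K_2$ given by the separating path theorem of Bonamy, Botler, Dross, Naia and~Skokan. For the inductive step, let $H' = H + e$ where $e$ either joins two existing vertices of $H$ or is a pendant edge attaching a new vertex; since every graph can be built from $K_2$ by repeatedly applying these two operations, handling both moves suffices. I would first reduce to the setting where $G$ is 2-edge-connected via a block decomposition: once $|E(H)|\geq|V(H)|$ every subdivision of $H$ is itself 2-edge-connected, so bridges of $G$ must appear as isolated edges of the separating system, contributing only an additive $\bigoh(n)$ term.

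Assuming $G$ is 2-edge-connected, I would apply the inductive hypothesis to obtain an $H$-separating system $\calf$ of size $C(H)\cdot n$. The key idea is to replace each $H$-subdivision $F\in\calf$ by an $H'$-subdivision extending $F$, obtained by attaching a single internally disjoint path in $G-E(F)$ between the two branch vertices of~$F$ corresponding to $e$ (or ending at a fresh branch vertex when $e$ is pendant). This operation preserves every separation already witnessed by $\calf$, since edges are added to $F$ but never removed; and by Menger's theorem in the 2-edge-connected graph $G$, an extra edge-disjoint path between the required pair exists generically. The key technical step is to show that $\calf$ may be chosen or modified, at constant blow-up cost in its size, so that every non-edge element of the system admits such an extension in~$G$.

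The main obstacle is precisely the handling of $H$-subdivisions which resist extension, a failure which occurs when $F$ saturates a locally sparse region, or when a small edge-cut of $G$ separates the relevant branch vertices of $F$ from any suitable continuation. Overcoming this requires a refined structural decomposition of~$G$ in the spirit of the ear decompositions and nested $K_4$-subdivision constructions developed for Theorem~\ref{thm:cycles} and its $K_4$-analogue in this paper, leveraging the fact that a 2-edge-connected graph failing to admit an $H'$-subdivision through a prescribed pair of edges must have constrained structure (small vertex cuts, few available branch vertices, or locally low average degree). An attractive alternative would bypass induction entirely and instead establish a ``many edge-disjoint $H$-subdivisions'' lemma providing a near-partition of $E(G)$ into $\bigoh(n)$ subdivisions of~$H$, then boost this cover into a separating system via a constant-factor random relabelling argument analogous to those underlying the path and cycle proofs.
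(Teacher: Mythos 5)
This statement is Conjecture~\ref{conj:edge-separation} of the paper and is \emph{open}: the paper proves it only for $H=K_3$ and $H=K_4$, so there is no proof of the general statement to compare your proposal against. What you have written is a research plan rather than a proof --- you yourself defer the decisive step (``the key technical step is to show\ldots'', ``the main obstacle is precisely\ldots'') --- so it cannot be accepted as a proof on those grounds alone. More importantly, the inductive step contains a concrete error.

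The claim that replacing each $F\in\calf$ by an extension $F^{+}\supseteq F$ ``preserves every separation already witnessed by $\calf$, since edges are added to $F$ but never removed'' is backwards. Separation of $e$ from $f$ by $S$ requires both $e\in S$ \emph{and} $f\notin S$; enlarging $S$ preserves the first condition but can destroy the second. If an attached path contains $f$, the witness for the pair $(e,f)$ is lost, and nothing guarantees another element of $\calf$ still provides one. Controlling which \emph{other} edges get swept into a subdivision is exactly the difficulty the paper's $K_4$ argument is organised around: the construction covers each special matching by ``taking shortcuts'' along $C$ precisely so that the resulting subdivision avoids the edges it must be separated from. Your extension paths would need to avoid, for every pair whose only witness is $F$, the second edge of that pair --- a global constraint that a local Menger-type argument cannot see; and 2-edge-connectivity only yields edge-disjoint paths, not a path internally vertex-disjoint from a possibly spanning subdivision $F$, which is what extending a subdivision requires. (Also, $|E(H)|\ge|V(H)|$ does not force subdivisions of $H$ to be 2-edge-connected: a triangle with a pendant edge is a counterexample, since subdividing preserves bridges.) Finally, the alternative sketch is not a shortcut either: an $\bigoh(n)$ cover by $H$-subdivisions carries no separation information by itself, and upgrading covers to separating systems is precisely the nontrivial content of the path, cycle and $K_4$ theorems, none of which proceeds by a ``random relabelling'' of a cover.
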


It is not hard to check that in order to solve Conjecture~\ref{conj:edge-separation},
it suffices to verify it in the case \(H\) is a complete graph.
The main result of this paper is that Conjecture~\ref{conj:edge-separation}
holds in the case \(H = K_4\) (see Section~\ref{sec:main}).

\begin{theorem}\label{thm:K4-separation}
  Every graph on~$n$ vertices admits a \(K_4\)-separating cycle system of size~\(\RR n\).
\end{theorem}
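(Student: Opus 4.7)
My plan is to derive Theorem~\ref{thm:K4-separation} from Theorem~\ref{thm:cycles} by upgrading each cycle in a separating cycle system to a $K_4$-subdivision, noting that the target constant $82 = 2 \cdot 41$ strongly suggests two applications of the cycle result.

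First, I would reduce to 2-edge-connected graphs. Since every edge of a $K_4$-subdivision lies in a cycle, each bridge of $G$ must appear in the $K_4$-separating system as an isolated edge, contributing at most $n-1$ elements. The $K_4$-separation problem then decouples across the 2-edge-connected blocks of $G$, so it suffices to handle a 2-edge-connected graph $G$ on $n$ vertices.

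For such $G$, I would apply Theorem~\ref{thm:cycles} to obtain a separating cycle system $\calc$ of size at most $41n$, and then attempt to replace each cycle $C \in \calc$ by a $K_4$-subdivision $H^*_C$ containing $C$ and separating the same pairs of edges as $C$ does. Structurally, a $K_4$-subdivision is obtained from a cycle by attaching two internally disjoint paths whose endpoints interleave on the cycle; when $G$ is 3-edge-connected, such extensions exist by Menger's theorem. To orchestrate these extensions so that \emph{(i)}~no extension uses edges that the corresponding cycle was supposed to exclude, and \emph{(ii)}~2-edge-cuts (which locally obstruct extension) are handled uniformly, I would apply Theorem~\ref{thm:cycles} a second time to an auxiliary graph derived from $G$ and $\calc$, obtaining a second family of cycles that provide the ``chord material'' for the extensions. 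This would account for the factor~$2$ in the constant.

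The principal obstacle is condition~\emph{(i)}: a naive extension of $C$ can swallow an edge $f$ that $C$ was separating from some $e \in E(C)$, destroying that separation. I would address this either by \emph{(a)} strengthening Theorem~\ref{thm:cycles} to a ``robust'' version whose cycles avoid a prescribed small edge set, or by \emph{(b)} designing the second application's auxiliary graph so that its cycles naturally avoid the forbidden edges. A secondary obstacle is the case of 2-edge-cuts inside a 2-edge-connected block: I would decompose along these cuts using the standard 3-edge-connected component tree, adding the cut edges as isolated edges (contributing $\bigoh(n)$ extra elements absorbed by the constant), and recurse on each 3-edge-connected piece.
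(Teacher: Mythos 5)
Your proposal inverts the paper's logical order: the paper proves Theorem~\ref{thm:K4-separation} directly, and Theorem~\ref{thm:cycles} is then obtained either as a corollary (with constant $164$) or by rerunning the same argument with simplifications (giving $41$); the identity $82=2\cdot 41$ is an accident of the bookkeeping, not evidence for a two-application strategy. More importantly, the central step of your plan --- replacing each cycle $C$ of a separating cycle system by a $K_4$-subdivision $H^*_C\supseteq C$ --- has a gap that your items \emph{(a)} and \emph{(b)} name but do not close. Enlarging an element of a separating system can only destroy separations: if $H^*_C$ absorbs an edge $f$, then every pair $(e,f)$ with $e\in E(C)$ for which $C$ was the sole witness is no longer separated. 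The set of edges that $C$ must avoid is therefore not a ``prescribed small edge set'': it depends on which pairs the \emph{whole system} leaves to $C$ alone, it can be essentially all of $E(G)\setminus E(C)$, and it is determined only after the system is fixed, so a ``robust'' version of Theorem~\ref{thm:cycles} avoiding it is circular, and the auxiliary graph in \emph{(b)} is not described concretely enough to evaluate. A second unaddressed point is that many cycles admit no extension to a $K_4$-subdivision at all (e.g.\ in a theta graph or any series-parallel block); decomposing along $2$-cuts into $3$-edge-connected pieces does not send the cycles of your system to cycles of the pieces, and even $3$-connectivity does not by itself hand you two internally disjoint chordal paths with interleaved endpoints on a \emph{given} cycle.

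The paper sidesteps all of this by never extending pre-existing cycles. It inducts on $n$, uses P\'osa rotation (Lemma~\ref{lemma:brandt}) to find a set $S$ with $|N(S)|\le 2|S|$ lying on a single cycle $C$, fixes one subdivision $K=C+e+e'$ of $K_4$ via two crossing chords, and builds each separating element from scratch as $K_M\subseteq K\cup M$ for a carefully designed matching $M$ (the families $M_{k,t,\rho}$ and $N_{k,t,\rho}$). Since the edges of $K$ inside $H$ are separated by the auxiliary family $\mathcal{K}_S\cup D'$ (via J{\o}rgensen--Pyber), each $K_M$ meets $E(H')$ in exactly $M$, so no separation is ever destroyed. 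If you want to rescue your route, the missing ingredient is precisely such a mechanism controlling which ``still-to-be-separated'' edges each extension may touch --- which is what the matching construction supplies.
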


Since every subdivision of \(K_4\) can be covered by two
cycles, Theorem~\ref{thm:K4-separation} implies a version of
Theorem~\ref{thm:cycles} in which the constant \(41\) is
replaced by \(164\).

The strategy presented here is similar to the strategy in~\cite{BonamyBotlerDrossNaiaSkokan23}.
Namely, we reduce the main problem to the case of graphs containing a certain spanning subdivision of a~clique.
Next, we define a linear number of special matchings that separate the edges outside
this structure, and cover each such matching by a suitable subdivision.
We emphasize that we make no effort to reduce the multiplicative constant.

\medskip
\noindent
\textbf{Path separation versus $K_4$ separation.}
In~\cite{BonamyBotlerDrossNaiaSkokan23}, the authors reduce the problem to the case of
graphs containing a Hamilton path~$P=v_1\cdots v_n$.
They consider $5n$ matchings~\(M_k = \{v_iv_j \in E(G): i+j = k,\, i<j\}\) and \(N_k = \{v_iv_j \in E(G): i+2j = k,\,i<j\}\)
and show that each such matching~$M$ can be covered by a path~$P_M$ with $M\subseteq E(P_M)\subseteq E(P\cup M)$.
The argument is completed using a linear path-covering result.

In contrast, here the reduction is to graphs containing a subdivision~$K$ of $K_4$ with
the property that $K$ contains a Hamilton cycle~$C=v_1\cdots v_nv_1$.
We consider $3n$ matchings~\(M_k = \{v_iv_j \in E(G): j-i = k,\, i<j\}\) and \(N_k = \{v_iv_j \in E(G) : j-2i = k,\,i<j\}\).
While some of these matchings cannot be covered by a single subdivision (see Figure~\ref{fig:bad-example}),
we can show that a bounded number of subdivisions suffice.
The argument is completed using a linear clique-subdivision-covering result.

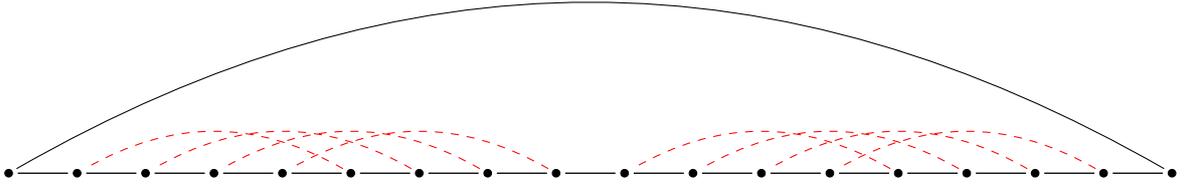
\begin{figure}[h]
\centering
\begin{tikzpicture}[node distance=2cm,scale = .9]
% Vértices
\tikzset{black vertex/.style={circle,draw,minimum size=1mm,inner sep=0pt,outer sep=2pt,fill=black, color=black}}

\foreach \x in {1,...,18}{
    \node[black vertex] (\x) at (\x,0) {};
}

\draw[line width = .5pt] (1) -- (2) -- (3) -- (4) -- (5) -- (6) -- (7) -- (8) -- (9) -- (10) -- (11) -- (12) -- (13) -- (14) -- (15) -- (16) -- (17) -- (18);

% Arestas curvas
\foreach \i in {6,7,8,9,14,15,16,17}
{
    \pgfmathtruncatemacro{\j}{(\i-4)}
    \ifnum\j>0
        \draw[bend right,color=red,dashed] (\i) to (\j);
    \fi
}
\draw[bend right = 30] (18) to (1);

\end{tikzpicture}
\caption{A graph consisting of a cycle together with a matching \(M \subseteq \{u_iu_j: j-i = 4\}\) (in dashed red) having no cycle that contains all edges of~\(M\).}
\label{fig:bad-example}
\end{figure}

\bigskip
\noindent
\textbf{P\'osa rotation-extension.}
We use the following standard notation.
Given a graph \(G\), and  a set \(S\subseteq V(G)\),
we denote by \(N_G(S)\) the set of vertices \emph{not} in $S$ adjacent in \(G\) to some vertex in \(S\).
We omit subscripts when clear from the context.

Given a graph $G$ and vertices $u, v$ in \(G\), let \(P = u \cdots v\) be a path from $u$ to $v$.
If \(x\in V(P)\) is a neighbor of \(u\) in \(G\)
and \(x^-\) is the vertex preceding \(x\) in~\(P\),
then \(P' = P-xx^- + ux\) is a path in \(G\) for which \(V(P') = V(P)\).
We say that \(P'\) \defi{has been obtained from} \(P\) by an
\defi{elementary exchange} fixing~\(v\) (see Figure~\ref{fig:rotation}).
A path obtained from~\(P\) by a (possibly empty) sequence of elementary exchanges fixing~\(v\)
is said to be a path \defi{derived} from~\(P\).
The set of endvertices of paths derived from~\(P\) distinct from~\(v\) is denoted by~\(S_v(P)\).
Since all paths derived from~\(P\) have the same vertex set as~\(P\), we have~\(S_v(P) \subseteq V(P)\).
The following lemma arises when \(P\) is a longest path ending at \(v\) (for a proof see also~\cite{BonamyBotlerDrossNaiaSkokan23}).

\begin{figure}[ht]
  \begin{center}
    \tikzset{highlight/.style={orange!75!white,line width=6pt,line cap=round}}
%\tikzset{highlight2/.style={tngreen!80,line width=11pt,line cap=round}}
\tikzset{highlight2/.style={tngreen!70!yellow!60,line width=11pt,line cap=round}}
\tikzset{psline/.style={decorate,decoration={amplitude=0.3mm,segment length=2mm,post length=1.5mm,pre length=1.5mm,coil,aspect=0}}}
\tikzset{plainlin/.style={line width=1pt}}
\def\shortendistance{5pt}
\tikzset{plainlin shorten/.style={line width=1pt,shorten >=\shortendistance,shorten <=\shortendistance}}

\tikzset{lin shorten/.style={shorten >=\shortendistance,shorten <=\shortendistance,line width=1pt,decoration={markings, mark=at position 0.5*\pgfdecoratedpathlength+1.8pt with {\arrow{angle 90}}}, postaction={decorate}}}
% Name of file with picture %%%%%%%%%%%%%%%%%%%%%%%%%%%%%%%%%%%%%%%%%%%

\newcommand{\vertexat}[1]{\fill [very thick,draw=white,fill=black] #1 circle (3pt);}

\begin{tikzpicture}

    \coordinate  (u)   at  (0,  1.5);
    \coordinate  (x-)  at  (2,  1.5);
    \coordinate  (x)   at  (3,  1.5);
    \coordinate  (v)   at  (5,  1.5);

    \begin{scope}[opacity=.4]
      \draw [highlight,rounded corners] (x-) to (u) to [out= 45, in=135] (x)
      to  (v);
    \end{scope}

    \draw [thick] (u) to (x-) to (x) to (v);
    \draw [thick] (u) to [out=45,in=135] (x);

    \foreach \nn in {u,x-,x,v} {
      \vertexat{(\nn)};
    }

    \node [anchor=north] at (u) {$u\vphantom{x^-}$};
    \node [anchor=north] at (x-) {$x^-$};
    \node [anchor=north] at (x) {$x\vphantom{x^-}$};
    \node [anchor=north] at (v) {$v\vphantom{x^-}$};
\end{tikzpicture}
    \caption{a path (highlighted)
      obtained by an elementary exchange fixing~\(v\).}
    \label{fig:rotation}
  \end{center}
\end{figure}

\begin{lemma}[\cite{brandt2006global}]\label{lemma:brandt}
  Let  \(P = u \cdots v\) be a longest path of a graph $G$
  and let \(S = S_v(P)\). Then \(|N_G(S)| \leq 2|S|\).
\end{lemma}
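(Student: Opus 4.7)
My plan is to apply the classical P\'osa rotation–extension technique. First, I would verify that $N_G(S)\subseteq V(P)$: each elementary exchange preserves the vertex set, so every derived path $Q$ satisfies $V(Q)=V(P)$ and, having the same length as $P$, is itself a longest path in $G$. Consequently, for any $w\in S$ and any derived path $Q_w$ ending at $w$, every $G$-neighbor of $w$ lies on $Q_w$---otherwise $Q_w$ could be extended, contradicting the maximality of~$P$. Hence $N_G(S)\subseteq V(P)$.

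Next, I would establish the key rotation identity. Fix $w\in S$ and write a derived path ending at $w$ as $Q_w = v = p_0, p_1, \ldots, p_{n-1} = w$. For every $G$-neighbor $y = p_i$ of $w$ with $i\le n-3$, the elementary exchange that deletes $p_i p_{i+1}$ and adds $w p_i$ produces a derived path with endpoints $v$ and $p_{i+1}$; therefore $p_{i+1}\in S$. In words, the path-successor (in the direction $v\to w$) of every $G$-neighbor of $w$ on $Q_w$, other than $w$'s own path-neighbor, belongs to~$S$.

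To conclude, I would combine these local relations into a global double count. To each $y\in N_G(S)$, I would associate its two neighbors on a suitably chosen derived path through $y$; using the rotation identity on both sides of $y$ (swapping the roles of the two endpoints via reverse traversal), both of these neighbors can be shown to lie in $S\cup\{v\}$. Since each $s\in S\cup\{v\}$ is a path-neighbor of at most two vertices, a standard double count then yields $|N_G(S)|\le 2|S|$, with a small boundary argument absorbing the possible contribution of~$v$.

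The main obstacle is path-dependence: the rotation-successor produced above is defined only relative to a particular derived path $Q_w$, so turning these local associations into a single, globally defined $2$-to-$1$ map requires showing that $S$ is closed under rotation-successors in a sufficiently robust sense, so that every relevant rotation can be realized simultaneously. This closure property is the technical heart of the classical proof in~\cite{brandt2006global}, and it is also carried out in detail in~\cite{BonamyBotlerDrossNaiaSkokan23}.
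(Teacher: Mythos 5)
The paper itself does not prove Lemma~\ref{lemma:brandt}: it quotes the statement from~\cite{brandt2006global} and points to~\cite{BonamyBotlerDrossNaiaSkokan23} for a proof, so there is no in-paper argument to compare against; your sketch must therefore stand on its own. Its first two steps do: every derived path is a longest path with vertex set $V(P)$, so $N_G(S)\subseteq V(P)$, and your rotation identity (the successor, towards the moving end, of any $G$-neighbour of that end lies in $S$) is correct. The concluding step, however, rests on a false claim. It is not true that \emph{both} neighbours of $y\in N_G(S)$ on a suitably chosen derived path lie in $S\cup\{v\}$: already when $G$ is itself a path, the only derived path is $P$, $S=\{u\}$, $N_G(S)$ is the $P$-neighbour of $u$, and its other $P$-neighbour lies in neither $S$ nor $\{v\}$. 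The ``reverse traversal'' you invoke to control the second neighbour is not available, because $v$ is the \emph{fixed} endpoint in the definition of $S_v(P)$; exchanges pivoting at the other end produce endpoints of a different set, not of $S_v(P)$. The claim you actually need is one-sided: $N_G(S)\subseteq S^+\cup S^-$, where $S^{\pm}$ denote the successors and predecessors of $S$ along the \emph{original} path $P$. Since each vertex of $S$ has at most two neighbours on $P$, this immediately gives $|N_G(S)|\le |S^+|+|S^-|\le 2|S|$, with no boundary bookkeeping. (Note also that your own arithmetic, taken literally, would yield $|N_G(S)|\le |S|+1$, which is generally false.)

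More importantly, the step you yourself flag as ``the technical heart'' is the entire content of the lemma, and it is absent. The rotation identity gives that the $Q$-successor of $y$ lies in $S$ for a path $Q$ depending on $y$; to run any double count you must convert this into a statement about $P$. The standard way is an induction on the number of exchanges showing that every edge deleted in forming a derived path has an endpoint in $S$ (namely the new endpoint $x^-$), and every edge added is incident with the current endpoint, which is in $S$; hence for any $y\notin S\cup S^+\cup S^-$ the two $P$-edges at $y$ survive in every derived path. Applying the rotation identity to such a $y$ and a derived path ending at a $G$-neighbour of $y$ in $S$ then shows that a $P$-neighbour of $y$ is in $S$, a contradiction, which proves $N_G(S)\subseteq S^+\cup S^-$. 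Without this closure argument your local, path-dependent associations cannot be assembled into a globally defined map, so the proposal as written is not a proof.
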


\section{$K_4$-separating systems}\label{sec:main}

In this section we verify Conjecture~\ref{conj:edge-separation} when \(H = K_4\).
Our argument requires a result stating that every $n$-vertex graph admits a \(K_4\)-cover
of size $\bigoh(n)$.
Although we can prove such a statement simultaneously to the separation result,
we obtain a better leading constant by using
the following result due to J{\o}rgensen and Pyber~\cite{jorgensen1990covering}.
In fact, J{\o}rgensen and Pyber showed that every $n$-vertex graph admits a $K_t$-cover
of size $\bigoh_t(n)$.

\begin{theorem}[{\cite{jorgensen1990covering}}]\label{thm:covering-main}
Every graph on \(n\) vertices admits a \(K_4\)-cover of size at most \(2n-3\).
\end{theorem}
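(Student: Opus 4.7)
My plan is to prove by induction on $n$ that every graph $G$ on $n$ vertices admits a $K_4$-cover of size at most $2n - 3$, using the at most three edges as cover elements to dispatch the base cases $n \leq 3$. The first step is to reduce to the case in which $G$ is $2$-connected. If $G$ has $c$ connected components with vertex counts $n_1, \ldots, n_c$, summing the inductive bounds gives $\sum_i (2n_i - 3) = 2n - 3c \leq 2n - 3$; if $G$ is connected with block decomposition into $b \geq 2$ blocks, then $\sum_i n_i = n + b - 1$, so $\sum_i (2n_i - 3) = 2n - 2 - b \leq 2n - 4$. Hence I may assume $G$ is $2$-connected.

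If $G$ contains a vertex $v$ with $\deg_G(v) \leq 2$, I apply the inductive hypothesis to $G - v$, obtaining a cover of size at most $2(n - 1) - 3 = 2n - 5$, and append the at most two edges at $v$ individually, for a cover of $G$ of size at most $2n - 3$. In the remaining case $\delta(G) \geq 3$, a classical result guarantees a subdivision $H$ of $K_4$ in $G$. I aim to choose $H$ together with a vertex $v$ all of whose incident edges lie in $E(H)$; then $v$ is isolated in $G - E(H)$ and can be deleted, and induction on $G - E(H) - v$ gives a cover of size at most $2(n-1) - 3 = 2n - 5$, for a total of at most $1 + (2n - 5) = 2n - 4$. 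The natural candidate for $v$ is a branch vertex of $H$ with $\deg_G(v) = 3$; when $\delta(G) = 3$, a Menger-type argument exploiting $2$-connectivity should always deliver such a configuration, by extending the three neighbors of a degree-$3$ vertex to three internally disjoint paths meeting a common triangle substructure elsewhere in $G$.

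The principal obstacle is the dense regime $\delta(G) \geq 4$, where no degree-$3$ vertex is available for the peeling trick above. A promising strategy is to select $H$ minimizing its number of subdivision vertices (so that $H$ resembles $K_4$ as closely as possible) and to use structural results of Mader-type to locate a vertex whose non-$H$ edges form a short attachment that can be absorbed into the cover without exceeding the budget. Alternatively, one can exploit an open ear decomposition $G = C \cup P_1 \cup \cdots \cup P_{m - n}$ and greedily pair ears whose joint addition creates a $K_4$-subdivision, arguing that the leftover edges form a series-parallel subgraph (which has at most $2n - 3$ edges and is coverable individually); matching the precise constant $2n - 3$ in this regime is the delicate step, with any required slack recovered from the block-reduction phase.
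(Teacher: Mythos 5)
This statement is not proved in the paper at all: it is imported verbatim from J{\o}rgensen and Pyber \cite{jorgensen1990covering} and used as a black box, so there is no in-paper argument to compare yours against; I can only assess your proposal on its own terms. Your reductions are sound as far as they go: the component and block computations ($\sum_i(2n_i-3)=2n-3c$ and, via $\sum_i n_i=n+b-1$, the bound $2n-2-b$) are correct, and peeling a vertex of degree at most~$2$ costs at most two edge-elements against a saving of $2$ in the budget. (A pedantic caveat: for $n=1$ the bound $2n-3$ is negative, so the base case should really start at $n\ge 2$ or assume $G$ nonempty.)

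The genuine gap is the entire case $\delta(G)\ge 3$, which is where the theorem actually lives. For $\delta(G)=3$ you assert that a ``Menger-type argument \ldots should always deliver'' a $K_4$-subdivision having a degree-$3$ vertex of $G$ as a branch vertex; this is plausible but is exactly the nontrivial structural lemma one would have to prove, and you give no argument. For $\delta(G)\ge 4$ the peeling strategy is not merely delicate but structurally impossible: every vertex of a subdivision of $K_4$ has degree at most $3$ in it, so no single subdivision can absorb all edges at a vertex of degree $\ge 4$, and hence removing $E(H)$ never isolates a vertex. Since your induction is on $n$, spending one unit of the budget $2n-3$ on a subdivision without decreasing $n$ cannot close the induction, so a different measure or a different extremal input is required. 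The fallback you sketch --- greedily deleting $K_4$-subdivisions until the remainder is series-parallel (hence has at most $2n-3$ edges) --- bounds the size of the \emph{leftover} but gives no control on the \emph{number} of subdivisions deleted, which is the quantity the theorem bounds. As it stands the proposal proves the easy reductions and explicitly defers the main case, so it is not a proof of the theorem.
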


The proof of Theorem~\ref{thm:K4-separation} is divided into four parts.
First we apply Lemma~\ref{lemma:brandt} in order to reduce the problem to the case where the studied graph contains a special spanning subdivision of \(K_4\);
then we define the \(M_k\)'s and \(N_k\)'s and show how to partition them into matchings in order to avoid the
problem illustrated in Figure~\ref{fig:bad-example};
the third step is to consider the union of each matching and the spanning subdivision of \(K_4\),
and show that it contains another subdivision of~\(K_4\) covering the matching;
and we finally argue that the obtained collection is the desired \(K_4\)-separating system.

Let $H_1$ and~$H_2$ be (not necessarily edge-disjoint) subgraphs of a graph~$G$,
and consider a family $\mathcal{S}$ of subgraphs of~$G$.
We say that $\mathcal{S}$ \defi{separates $H_1$ from $H_2$}
if for all distinct edges \((e,f) \in E(H_1) \times E(H_2)\),
there is $S\in\mathcal{S}$ that separates \(e\) from \(f\).
Therefore, \(\mathcal{S}\) is a separating system of \(G\) if \(\mathcal{S}\) separates \(G\) from itself.

\begin{proof}[Proof of Theorem~\ref{thm:K4-separation}]
We proceed by induction on $n$.
Let \(G\) be a graph with \(n\) vertices.
If~\(G\)~is empty, the result trivially holds.
If not, we consider a longest path $P=u\cdots v$ of~$G$ and
put $S=S_v(P)$ as in Lemma~\ref{lemma:brandt}.
Now, let \(w\) be the vertex that is closest to $v$ in~\(P\) and has a neighbor~$u'$ in~\(S\),
and let \(P'=u'\cdots v\) be a path obtained from~\(P\) by elementary exchanges fixing~\(v\)
so that \(P'\) starts with~\(u'\).
Then \(C = (P' + u'w) \setminus E(P[w,v])\) is an edge or a cycle that contains \(S\cup N(S)\)
(\(C\) is an edge when \(S = \{u\}\) and \(|N(S)| = 1\)).

Let~\(H\) be the subgraph of \(G\) induced by the edges incident to vertices of \(S\),
and note that \(h = |V(H)| \leq 3|S|\).
Let \(G' = G \setminus S\).
Note that, since \(G\) is not empty, \(S\) is not empty.
By the induction hypothesis,
there is a \(K_4\)-separating system \(\mathcal{S}'\) of \(G'\) of size at most \(\RR(n-|S|)\).
Note that \(\mathcal{S}'\) separates  \(G'\) from \(H\) since \(\mathcal{S}'\) covers~\(G'\).
In what follows,
we construct a set \(\mathcal{S}\) of edges and subdivisions of \(K_4\) that separates
\(H\) from \(G\).
Moreover, we obtain \(\mathcal{S}\) so that \(|\mathcal{S}| \leq \RR|S|\),
and hence \(\mathcal{S}' \cup \mathcal{S}\)
is a \(K_4\)-separating system of \(G\)
with cardinality at most  \(\RR(n-|S|) + \RR |S| = \RR n\) as desired.

Now, we show that either \(e(H)\le \RR|S|\)
or $H$~contains a subdivision of \(K_4\) that contains~\(C\).
For that, let us write \(C = u_1 \cdots u_mu_1\).
We remark that $C$ may contain vertices in $V(G)\setminus V(H)$.
Let $v_1,\ldots,v_h$ be the vertices of~$H$
in the order that they appear in $C$.
Formally, for $i \in [h]=\{1,\ldots,h\}$,
we define $\sigma(i)$ to be the index of the $i^{\textrm{th}}$ vertex $u_{\sigma(i)}$ of \(H\) in~$C$,
and set $v_i=u_{\sigma(i)}$.
In what follows, each edge \(v_iv_j\) is written so that \(i < j\).
We say that two edges \(v_iv_j\) and \(v_{i'}v_{j'}\) of \(H\setminus E(C)\) \emph{cross}
if $v_{i'}$ and $v_{j'}$ lie in distinct components of $C\setminus \{v_i,v_j\}$.
If no two edges in~\(H\setminus E(C)\) cross,
then \(H\) is an outerplanar graph,
and hence \(e(H)\le 2h-3\) (note that $h\ge 2$).
In this case we set \(\mathcal{S}\) as
the set of subgraphs each consisting of a single edge in~\(H\),
and we are done because \(|\mathcal{S}|=|e(H)| \leq 2h-3 \leq 6|S| \leq \RR |S|\).

Therefore, we may assume that there are crossing edges \(e,e'\) in \(H\setminus E(C)\).
Note that in this case \(K = C + e + e'\) is a subdivision of \(K_4\).
Let \(K_S = E(K) \cap E(H)\) be the set of edges of \(K\)
having at least one vertex in \(S\) (see Figure~\ref{fig:NS}),
and let \(\calk_S\) be
the set of subgraphs each consisting of a single edge in \(K_S\).
Observe that $|\calk_S|=e(K_S)\le 2|S|+2$.
By Theorem~\ref{thm:covering-main}, there is a \(K_4\)-cover \(D'\) of \(H' = H\setminus E(K_S)\)
of size at most \(2 h -3\leq 6 |S|\).
Note that \(\mathcal{K}_S \cup D'\)
has size at most \(2|S| + 2 + 6|S| \leq 10|S|\) (using that \(2 \leq 2|S|\)) and separates
(i)  \(H\) from \(G'\);
(ii)  \(H'\) from \(K\);
and (iii) \(K_S\) from \(G\).

\begin{figure}
  \vspace*{-1.5cm}
  \begin{center}
    \hfill
    \begin{minipage}{.48\textwidth}
      \tikzset{highlight/.style={orange!75!white,line width=6pt,line cap=round}}
%\tikzset{highlight2/.style={tngreen!80,line width=11pt,line cap=round}}
\tikzset{highlight2/.style={tngreen!70!yellow!60,line width=11pt,line cap=round}}
\tikzset{psline/.style={decorate,decoration={amplitude=0.3mm,segment length=2mm,post length=1.5mm,pre length=1.5mm,coil,aspect=0}}}
\tikzset{plainlin/.style={line width=1pt}}
\def\shortendistance{5pt}
\tikzset{plainlin shorten/.style={line width=1pt,shorten >=\shortendistance,shorten <=\shortendistance}}

\tikzset{lin shorten/.style={shorten >=\shortendistance,shorten <=\shortendistance,line width=1pt,decoration={markings, mark=at position 0.5*\pgfdecoratedpathlength+1.8pt with {\arrow{angle 90}}}, postaction={decorate}}}
% Name of file with picture %%%%%%%%%%%%%%%%%%%%%%%%%%%%%%%%%%%%%%%%%%%

\usetikzlibrary{calc,decorations.markings,decorations.pathmorphing,arrows,external,decorations.pathreplacing,arrows.meta,calc,decorations.markings,decorations.pathmorphing,shapes}

\newcommand{\vertexat}[1]{\fill [very thick,draw=white,fill=black] #1 circle (3pt);}

\definecolor{tnred}{RGB}{     255, 63,   63}
%\definecolor{tnorange}{RGB}{  255, 204,   6}
% #ecb939	()
\definecolor{tnorange}{RGB}{  220, 140,   6}
\definecolor{tnblue}{RGB}{     10, 154, 215}
\definecolor{tngreen}{RGB}{   114, 165,  10}
\definecolor{tnmagenta}{RGB}{ 215,  10, 154}
\definecolor{tnbg}{RGB}{252,252,252}
\definecolor{tnfg}{RGB}{5,8,12}

\begin{tikzpicture}

    \node at (1,4.3) {$S$};
    \node at (3,4.3) {$N(S)$};

    \draw [dotted,thick,gray!50] (2,-1.5) -- (2,5);
    \draw [dotted,thick,gray!50] (4,-1.5) -- (4,5);

\def\midx{3}
    \coordinate (a) at ($(5,-1)$);
    \coordinate (b) at ($(\midx,-1)$);
    \coordinate (c) at ($(1, -1)$);
    \coordinate (d) at ($(1, 0)$);
    \coordinate (e) at ($(\midx, 0)$);
    \coordinate (f) at ($(5, .5)$);
    \coordinate (g) at ($(\midx, 1)$);
    \coordinate (h) at ($(1, 1)$);
    \coordinate (i) at ($(0,1.5)$);
    \coordinate (j) at ($(1,2)$);
    \coordinate (k) at ($(\midx,2)$);
    \coordinate (l) at ($(5,2.5)$);
    \coordinate (m) at ($(\midx,3)$);
    \coordinate (n) at ($(5,3.5)$);
    
    \draw [highlight2,rounded corners,line cap=round,opacity=0.3] (a) to (b) to (c) to (d) to (e) to (f) to(g) to (h) to (i) to (j) to (k) to (l) to (m) to (n);

    \draw [plainlin] (a) -- (b);
    \draw [dashdotted, very thick,red] (b) -- (c);
    \draw [dashdotted, very thick,red] (c) -- (d);
    \draw [dashdotted, very thick,red] (d) -- (e);
    \draw [plainlin]%,gray!40,line width=1.5pt]
    (d) -- (h);
    \draw [plainlin]%,gray!40,line width=1.5pt]
    (d) -- (i);
    \draw [plainlin]%,gray!40,line width=1.5pt]
    (d) -- (g);
    \draw [plainlin] (e) -- (f);
    \draw [plainlin] (f) -- (g);
    \draw [dashdotted, very thick,red] (g) -- (h);
    \draw [dashdotted, very thick,red] (h) -- (i);
    \draw [plainlin]%,gray!40,line width=1.5pt]
    (h) -- (j);
    \draw [dashdotted, very thick,red] (i) -- (j);
    \draw [dashdotted, very thick,red] (j) -- (k);
    \draw [plainlin]%,gray!40,line width=1.5pt]
    (j) -- (m);
    \draw [plainlin] (k) -- (l);
    \draw [plainlin] (l) -- (m);
    \draw [plainlin] (m) -- (n);

    \foreach \nn in {a,b,c,d,e,f,g,h,i,j,k,l,m,n} {
      \vertexat{(\nn)}
    }

\end{tikzpicture}
    \end{minipage}
    \hfill
    \begin{minipage}{.48\textwidth}
      \vspace*{1.5cm}
      
\definecolor{tnred}{RGB}{     255, 63,   63}
%\definecolor{tnorange}{RGB}{  255, 204,   6}
% #ecb939	()
\definecolor{tnorange}{RGB}{  220, 140,   6}
\definecolor{tnblue}{RGB}{     10, 154, 215}
\definecolor{tngreen}{RGB}{   114, 165,  10}
\definecolor{tnmagenta}{RGB}{ 215,  10, 154}
\definecolor{tnbg}{RGB}{252,252,252}
\definecolor{tnfg}{RGB}{5,8,12}

\tikzset{highlight/.style={orange!75!white,line width=6pt,line cap=round}}
\tikzset{highlight2/.style={tngreen!70!yellow!60,line width=11pt,line cap=round}}
\tikzset{psline/.style={decorate,decoration={amplitude=0.3mm,segment length=2mm,post length=1.5mm,pre length=1.5mm,coil,aspect=0}}}
\tikzset{plainlin/.style={line width=1pt}}
\def\shortendistance{5pt}
\tikzset{plainlin shorten/.style={line width=1pt,shorten >=\shortendistance,shorten <=\shortendistance}}

\tikzset{lin shorten/.style={shorten >=\shortendistance,shorten <=\shortendistance,line width=1pt,decoration={markings, mark=at position 0.5*\pgfdecoratedpathlength+1.8pt with {\arrow{angle 90}}}, postaction={decorate}}}
% Name of file with picture %%%%%%%%%%%%%%%%%%%%%%%%%%%%%%%%%%%%%%%%%%%
\tikzsetnextfilename{fig-separation}% <==================================

\newcommand{\vertexat}[1]{\fill [very thick,draw=white,fill=black] #1 circle (3pt);}

\begin{tikzpicture}
  \node[shape=circle,fill=gray!40,minimum size=1.4cm] (H')  at (-2, 0)    {$H'$};
  \node[shape=circle,fill=gray!40,minimum size=1.4cm] (G')  at ( 2, 0)    {$G'$};
  \node[shape=circle,fill=gray!40,minimum size=1.4cm] (PS) at ( 0,-3) {$K_S$};

   \begin{scope}[->, shorten >=2pt,thick]
     \draw (H') .. controls +(180:1.5cm) and +(110:1.5cm) .. node [midway,xshift=-.25cm,yshift=.25cm] {$\calk$} (H');
    % \draw (H') .. controls +(180:1.5cm) and +(110:1.5cm) .. node [midway,xshift=.5cm,yshift=.7cm] {$P_k \cup Q_k$} (H');
    % \draw (H') .. controls +(180:1.5cm) and +(110:1.5cm) .. node [midway,xshift=.5cm,yshift=.7cm] {$P_1,\dots,P_{2n},Q_1,\dots,Q_{3n}$} (H');
    % \draw (H') .. controls +(180:1.5cm) and +(110:1.5cm) .. node [midway,xshift=-1.3cm,yshift=.2cm] {\parbox{2cm}{\,\,\,\,$P_1,\dots,P_{2n}$\newline$Q_1,\dots,Q_{3n}$}} (H');

    \draw (G') .. controls +(80:1.5cm) and +(0:1.5cm) .. node [midway,xshift=.25cm,yshift=.25cm] {$\mathcal{S}$} (G');

    % Loop below PS
    \draw (PS) .. controls +(-115:1.5cm) and +(-75:1.5cm) .. node [midway,xshift=0.1cm,yshift=-.3cm] {$\mathcal{K}_S$} (PS);
    % Loop above PS
    % \draw (PS) .. controls +(115:1.5cm) and +(75:1.5cm) .. node [midway,xshift=0.05cm,yshift=.25cm] {$\mathcal{K}_S$} (PS);

    \draw (H') .. controls +(40:1.5cm) and +(150:1.5cm) ..  node [pos=.3,yshift=-.3cm] {$\mathcal{D}'$}  (G');
    \draw (G') .. controls +(220:1.5cm) and +(-30:1.5cm) ..  node [pos=.3,yshift=.3cm] {$\mathcal{S}$}  (H');

    \draw (H') .. controls +(-110:2cm) and +(140:2cm) ..  node [pos=.45,yshift=.3cm,xshift=.2cm] {$\mathcal{D}'$}  (PS);
    \draw (PS) .. controls +(170:2.2cm) and +(-130:3cm) ..  node [pos=.2,yshift=-.3cm] {$\mathcal{K}_S$}  (H');

     \draw (G') .. controls +(-70:2cm) and +(40:2cm) ..  node [pos=.45,yshift=.3cm,xshift=-.2cm] {$\mathcal{S}$}  (PS);
     \draw (PS) .. controls +(10:2.2cm) and +(-50:3cm) ..  node [pos=.2,yshift=-.3cm] {$\mathcal{K}_S$}  (G');

    % \coordinate (aux1) at ($(H') + (-105:2.2cm)$);
    % \coordinate (aux2) at ($(PS) + (110:2cm)$);

    % \useasboundingbox (H') (PS) (G');
 \end{scope}

\end{tikzpicture}
    \end{minipage}
    \vspace*{-1.3\baselineskip}
    \caption{Left: a set $S$ in a Hamiltonian graph and its neighborhood~$N(S)$;
              part of a Hamiltonian cycle is highlighted, dashed red edges are the edges in~$C_S$. Right: cycles and edges that separate subgraphs of~$G$, where $A\stackrel{\alpha}{\to}B$ indicates that $\alpha$ separates $A$ from~$B$ (for instance, $\mathcal{Q'}$ separates $G'$ from $H'$).}\label{fig:NS}
  \end{center}
\end{figure}

\medskip\noindent
It remains to create a set of at most \(24h \leq 72|S|\) edges and subdivisions of \(K_4\)
that separates \(H'\) from itself  (see Figure~\ref{fig:NS}).
In order to obtain the final elements of the separating system,
we define special matchings.

\medskip
\noindent
\textbf{Easy matchings.}
A \defi{slicing} $A_1,\dots,A_s$ of $[a_1,h]$ is a sequence of intervals which together partition~$[a_1,h]$.
We always assume that intervals are  ordered consecutively, i.e.,
so that each $x\in A_i$ precedes all $y\in \bigcup_{j>i} A_j$.
Given an interval $A\subseteq [h]$,
we say that \(v_iv_j\) \emph{starts} in \(A\) if \(i\in A\);
and that \(v_iv_j\) \emph{ends} in \(A\) if \(j\in A\).
We say that a matching \(M\subseteq E(H')\) is \emph{easy}
if there is \(a_1\in [h]\),
a slicing \(A_1,\ldots, A_s\) of \([a_1,h]\)
and \(t\in\{0,1\}\) such that the following hold.
\begin{enumerate}[label=\textit{(\alph*)},ref=\textit{(\alph*)}]
\item\label{i:alternating}
  for each $v_iv_j\in M$ there is $r\in[s-1]$ such that $r\equiv t\pmod 2$, $v_i\in A_r$ and $j\in A_{r+1}$;
\item\label{i:crossing}
 the edges starting in $A_r$ are pairwise crossing, for all $r\in[s]$; and
\item\label{i:zero-or-odd}
  for each $r\in [s]$, the number of edges starting in $A_r$ is either zero or is odd.
\end{enumerate}

As we shall see, \ref{i:alternating}--\ref{i:zero-or-odd} suffice to overcome
our main obstruction when covering matchings by $K_4$ subdivisions.

\bigskip
\noindent
Now, for \(k \in [h-1]\),
let \(M_k = \{v_iv_j \in E(H') : j-i = k\}\),
and for \(k \in [-h+2,h-2]\),
let \(N_k = \{v_iv_j \in E(H') : j-2i = k\}\).
It is not hard to check that \(M_k\) and \(N_k\) are linear forests,
and each edge of \(H'\) is in precisely one \(M_k\) and precisely one \(N_k\).
This defines at most \(3h\) linear forests.
In what follows, we partition each \(M_k\) and \(N_k\) into four easy matchings,
yielding \(12h\) easy matchings.
First, in items (i) and (i') below,
we partition, respectively, each \(M_k\) and \(N_k\) into two matchings satisfying \ref{i:alternating} and~\ref{i:crossing}.

\begin{enumerate}
\item[\refstepcounter{enumi}\textit{(i)}]\label{i:(i)}
Let $k\in[h-1]$.
We partition \(M_k\) as follows.
Let \(A_1,\ldots,A_{\lceil n/k\rceil}\) be a slicing of \([h]\)
into sets of size \(k\) and at most one set \(A_{\lceil n/k\rceil}\) of size at most~\(k\).
Now, given \(t \in \{0,1\}\),
we let \(\mathcal{A}_t = \bigcup_{r \equiv t \pmod{2}} A_r\),
and then put \(M_{k,t} = \{v_iv_j \in M_k : i \in \mathcal{A}_t\}\).
So, for example, \(M_{k,1}\) consists of the edges of \(M_k\) that start in ``odd'' intervals of~\([h]\).
If \(i\in A_r\), then \(j\in A_{r+1}\) because \(j-i = k=|A_r|\) (see Figure~\ref{fig:step2}).
This implies that if \(v_iv_j\in M_{k,t}\), then \(i\in \mathcal{A}_t\) while \(j\in\mathcal{A}_{1-t}\).
In particular, \(M_{k,t}\) is a matching.
Therefore, the matchings $M_{k,t}$ satisfy~\ref{i:alternating}.
Now, suppose that \(v_iv_j,v_{i'}v_{j'}\in M_{k,t}\) are such that \(i,i'\in A_r\)
for some \(r\), and suppose, without loss of generality, that \(i < i'\).
Then \(j = k+i < k+i' = j'\), and hence \(v_iv_j\) and \(v_{i'}v_{j'}\) cross.
Therefore, the matchings \(M_{k,t}\) satisfy~\ref{i:crossing} as desired.
This step defined at most \(2h\) matchings.

\begin{figure}[h]
\centering
\begin{tikzpicture}[node distance=2cm,scale = .7]
% Vértices
\tikzset{black vertex/.style={circle,draw,minimum size=1mm,inner sep=0pt,outer sep=2pt,fill=black, color=black}}

\foreach \x in {1,...,20}{
    \node[black vertex] (\x) at (\x,0) {};
	\node () at (\x,-.5) {\x};
}

\draw[line width = .5pt] (1) -- (2) -- (3) -- (4) -- (5) -- (6) -- (7) -- (8) -- (9) -- (10) -- (11) -- (12) -- (13) -- (14) -- (15) -- (16) -- (17) -- (18) -- (19) -- (20);

% Arestas curvas
\foreach \i in {5,6,7,8,13,14,15,16}
{
    \pgfmathtruncatemacro{\j}{(\i-4)}
    \ifnum\j>0
        \draw[bend right,color=red,line width = 1.2pt] (\i) to (\j);
    \fi
}

\foreach \i in {9,10,11,12,17,18,19,20}
{
    \pgfmathtruncatemacro{\j}{(\i-4)}
    \ifnum\j>0
        \draw[bend right,color=blue,line width = 1.2pt, dashed] (\i) to (\j);
    \fi
}

\draw [thick,decoration={brace,mirror,raise=0.7cm, amplitude=5pt},decorate] (1.south west) -- (4.south east) node [pos=0.5,anchor=north,yshift=-0.9cm] {$A_1$};
\draw [thick,decoration={brace,mirror,raise=0.7cm, amplitude=5pt},decorate] (5.south west) -- (8.south east) node [pos=0.5,anchor=north,yshift=-0.9cm] {$A_2$};
\draw [thick,decoration={brace,mirror,raise=0.7cm, amplitude=5pt},decorate] (9.south west) -- (12.south east) node [pos=0.5,anchor=north,yshift=-0.9cm] {$A_3$};
\draw [thick,decoration={brace,mirror,raise=0.7cm, amplitude=5pt},decorate] (13.south west) -- (16.south east) node [pos=0.5,anchor=north,yshift=-0.9cm] {$A_4$};
\draw [thick,decoration={brace,mirror,raise=0.7cm, amplitude=5pt},decorate] (17.south west) -- (20.south east) node [pos=0.5,anchor=north,yshift=-0.9cm] {$A_5$};

\end{tikzpicture}
\caption{A partition of \(M_{4}\) into \(M_{4,0}\) and \(M_{4,1}\) in, respectively, dashed blue and solid red.}
\label{fig:step2}
\end{figure}
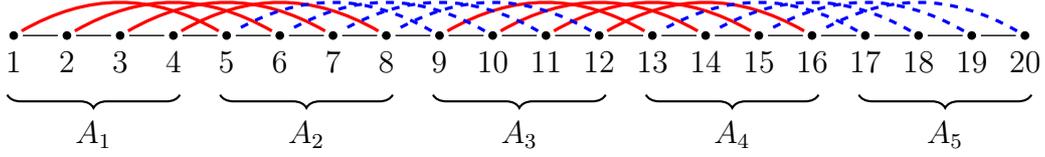

\item[\refstepcounter{enumi}\textit{(i')}]\label{i:(i')}
Let \(k\in[-h+2,h-2]\).
We partition \(N_k\) as follows.
We consider two cases. If $k \ge 0$,
then let $a_1=1$; otherwise let $a_1=1-k\ge 2$.
Now, for each $r \ge 1$ let $a_{r+1} = 2a_{r} +k$.
Since $a_1> -k$, we can prove by induction that $a_{r+1}>a_r> -k$ for all $r$.
Also, let \(d\) be the minimum integer for which \(h < a_{d+1}\).
We partition \([a_1,h]\) into sets \(A_1,\ldots, A_d\)
so that \(A_r = [a_r,a_{r+1}-1]\) for \(r\in [d-1]\),
and \(A_d = [a_d,h]\).

Note that every edge of $N_k$ starts in $\bigcup_{j\in[d]}A_d$.
Indeed, let \(v_iv_j\in N_k\).
Then \(j=k +2i\) and we claim that \(i \geq a_1\).
This is clear if \(a_1 = 1\).
Otherwise, \(a_1 = 1-k\),
but since \(j > i\), we must have \(k + 2i \geq i+1\), and hence \(i \geq 1-k = a_1\).
Moreover, if \(v_iv_j\) starts in \(A_r\),
then \(v_iv_j\) ends in $A_{r+1}$ since
\[
  a_{r+1}
  =     k + 2a_r
  \le   k + 2i
  \le   k + 2(a_{r+1}-1)
  <     k + 2a_{r+1}
  =     a_{r+2}.
\]

Given \(t \in \{0,1\}\),
we let \(\mathcal{A}_t = \bigcup_{r \equiv t \pmod{2}} A_r\),
and put \(N_{k,t} = \{v_iv_j \in N_k :  i \in \mathcal{A}_t\}\).
As in the preceding case, we can prove that
the \(N_{k,t}\) are matchings that satisfy
both \ref{i:alternating}~and~\ref{i:crossing} as desired.
This step defined at most \(4h\) matchings.

\begin{figure}[h]
\centering
\begin{tikzpicture}[node distance=2cm,scale = .5]
% Vértices
\tikzset{black vertex/.style={circle,draw,minimum size=1mm,inner sep=0pt,outer sep=2pt,fill=black, color=black}}

\foreach \x in {1,...,30}{
    \node[black vertex] (\x) at (\x,0) {};
	\node () at (\x,-.5) {\x};
}

\draw[line width = .5pt] (1) -- (2) -- (3) -- (4) -- (5) -- (6) -- (7) -- (8) -- (9) -- (10) -- (11) -- (12) -- (13) -- (14) -- (15) -- (16) -- (17) -- (18) -- (19) -- (20) -- (21) -- (22) -- (23) -- (24) -- (25) -- (26) -- (27) -- (28) -- (29) -- (30);

% Arestas curvas
\foreach \i in {3,4,5,6}
{
    \pgfmathtruncatemacro{\j}{(2*\i+1))}
    \ifnum\j>0
        \draw[bend left,color=blue,line width = 1.2pt, dashed] (\i) to (\j);
    \fi
}

\foreach \i in {1,2,7,8,9,10,11,12,13,14}
{
    \pgfmathtruncatemacro{\j}{(2*\i+1))}
    \ifnum\j>0
        \draw[bend left,color=red,line width = 1.2pt] (\i) to (\j);
    \fi
}

\draw [thick,decoration={brace,mirror,raise=0.7cm, amplitude=5pt},decorate] (1.south west) -- (2.south east) node [pos=0.5,anchor=north,yshift=-0.9cm] {$A_1$};
\draw [thick,decoration={brace,mirror,raise=0.7cm, amplitude=5pt},decorate] (3.south west) -- (6.south east) node [pos=0.5,anchor=north,yshift=-0.9cm] {$A_2$};
\draw [thick,decoration={brace,mirror,raise=0.7cm, amplitude=5pt},decorate] (7.south west) -- (14.south east) node [pos=0.5,anchor=north,yshift=-0.9cm] {$A_3$};
\draw [thick,decoration={brace,mirror,raise=0.7cm, amplitude=5pt},decorate] (15.south west) -- (30.south east) node [pos=0.5,anchor=north,yshift=-0.9cm] {$A_4$};

\end{tikzpicture}
\caption{A partition of \(N_{1}\) into \(N_{1,0}\) and \(N_{1,1}\) in, respectively, dashed blue and solid red.}
\label{fig:step2'}
\end{figure}
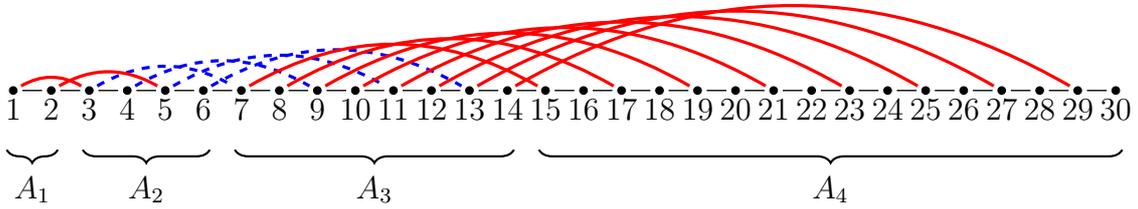

\item[\refstepcounter{enumi}\textit{(ii)}]\label{i:(ii)}
Finally, we partition each \(M_{k,t}\) and \(N_{k,t}\) obtained in steps \hyperref[i:(i)]{\textit{(i)}} and~\hyperref[i:(i')]{\textit{(i')}}
into matchings satisfying \ref{i:zero-or-odd}.
It is not hard to see that properties \ref{i:alternating}~and~\ref{i:crossing} are hereditary,
i.e., that if a matching $M$ satisfies \ref{i:alternating}~and~\ref{i:crossing}, then any subset of $M$ satisfies \ref{i:alternating}~and~\ref{i:crossing}.
Now, for each \(k\in [h]\) and each \(t\in\{0,1\}\)
we partition \(M_{k,t}\) into sets \(M_{k,t,0}\) and \(M_{k,t,1}\)
so that \(M_{k,t,0}\) and \(M_{k,t,1}\) have either zero or an odd number of edges starting in each~\(A_r\).
This works because every even-sized matching can be partitioned into two odd-sized matchings
and every odd-sized matching can be partitioned into an odd-sized matching and an empty matching.
We obtain \(N_{k,t,0}\) and~\(N_{k,t,1}\) analogously.
This step defined at most~\(12h\) matchings.

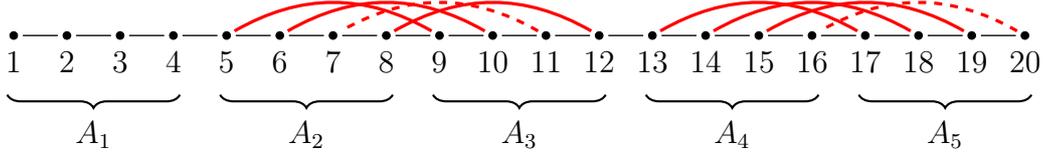
\begin{figure}[h]
\centering
\begin{tikzpicture}[node distance=2cm,scale = .7]
% Vértices
\tikzset{black vertex/.style={circle,draw,minimum size=1mm,inner sep=0pt,outer sep=2pt,fill=black, color=black}}

\foreach \x in {1,...,20}{
    \node[black vertex] (\x) at (\x,0) {};
	\node () at (\x,-.5) {\x};
}

\draw[line width = .5pt] (1) -- (2) -- (3) -- (4) -- (5) -- (6) -- (7) -- (8) -- (9) -- (10) -- (11) -- (12) -- (13) -- (14) -- (15) -- (16) -- (17) -- (18) -- (19) -- (20);

% Arestas curvas

\foreach \i in {9,10,12,17,18,19}
{
    \pgfmathtruncatemacro{\j}{(\i-4)}
    \ifnum\j>0
        \draw[bend right,color=red,line width = 1.2pt] (\i) to (\j);
    \fi
}

\foreach \i in {11,20}
{
    \pgfmathtruncatemacro{\j}{(\i-4)}
    \ifnum\j>0
        \draw[bend right,color=red,line width = 1.2pt,dashed] (\i) to (\j);
    \fi
}

\draw [thick,decoration={brace,mirror,raise=0.7cm, amplitude=5pt},decorate] (1.south west) -- (4.south east) node [pos=0.5,anchor=north,yshift=-0.9cm] {$A_1$};
\draw [thick,decoration={brace,mirror,raise=0.7cm, amplitude=5pt},decorate] (5.south west) -- (8.south east) node [pos=0.5,anchor=north,yshift=-0.9cm] {$A_2$};
\draw [thick,decoration={brace,mirror,raise=0.7cm, amplitude=5pt},decorate] (9.south west) -- (12.south east) node [pos=0.5,anchor=north,yshift=-0.9cm] {$A_3$};
\draw [thick,decoration={brace,mirror,raise=0.7cm, amplitude=5pt},decorate] (13.south west) -- (16.south east) node [pos=0.5,anchor=north,yshift=-0.9cm] {$A_4$};
\draw [thick,decoration={brace,mirror,raise=0.7cm, amplitude=5pt},decorate] (17.south west) -- (20.south east) node [pos=0.5,anchor=north,yshift=-0.9cm] {$A_5$};

\end{tikzpicture}
\caption{A partition of \(M_{4,0}\) into \(M'_{4,0}\) and \(M''_{4,0}\) in, respectively, solid red and dashed red.}
\label{fig:step3}
\end{figure}
\end{enumerate}

Let
\begin{align*}
  \mathcal{M} &\quad =\quad \bigl\{M_{k,t,\rho} : k\in[h],\ t, \rho\in\{0,1\}\bigr\} \\
  \mathcal{N} &\quad =\quad \bigl\{N_{k,t,\rho} : k\in[-h+2,h-2],\ t, \rho\in\{0,1\}\bigr\}
\end{align*}
and note that
\[
  \bigcup_{M\in\mathcal{M}} M
  =  E(H')
  = \bigcup_{M\in\mathcal{N}} M.
\]

\smallskip
\noindent
\textbf{Building subdivisions of \(K_4\).}
In what follows, given \(M\in \mathcal{M}\cup\mathcal{N}\) we show that \(K\cup M\) contains either one subdivision of \(K_4\) that contains~\(M\);
or one edge and a subdivision of $K_4$ that together cover~$M$.
Let $A_1,\dots,A_d$ be the slicing defined in \hyperref[i:(i)]{\textit{(i)}} or \hyperref[i:(i')]{\textit{(i')}} when selecting edges for~$M$.
This construction is divided into two cases.
We say that \(M\) is \emph{elementary} if for every~\(r\in[d]\),
there is at most one edge of \(M\) that starts in~\(A_r\);
otherwise we say that \(M\) is \emph{jumbled}.
By construction, if \(M\) is elementary, then no two edges of \(M\)~cross.
Recall that \(e\) and \(e'\) are edges not in \(H'\) that we use to complete \(C\) to a subdivision \(K\) of~\(K_4\).

\medskip
\noindent
\textbf{Elementary.}
Suppose \(M\) is elementary.
We divide this proof into two cases,
depending on whether \(M\) contains an edge crossing \(e\) or \(e'\).
Let \(w_1,w_2,w_3,w_4\) be the vertices of \(e\) and \(e'\) in the order that they appear in \(C\),
and assume, without loss of generality, that \(e = w_1w_3\) and \(e' = w_2w_4\).

\smallskip
\noindent
\underline{Case 1.}
Suppose that no edge of \(M\) crosses \(e\) or \(e'\).
Since no two edges of \(M\) cross, there is at most one edge, say \(f = v_iv_j\), of \(M\)
that starts before \(w_1\) and ends after \(w_4\),
in which case we pick $f$ as a single edge.
In either case, we cover $M$ (or $M\setminus\{f\}$) by a subdivision~$K_M$ of~$K_4$
obtained from $K$ by ``taking shortcuts'':
For every edge \(v_iv_j \in M\) (or~\(v_iv_j \in M\setminus\{f\}\)),
we replace the path \(C[v_i,v_j]\) by the edge~\(v_iv_j\).

\smallskip
\noindent
\underline{Case 2.}
Suppose that \(M\) contains some edge \(f\) that crosses \(e\) or \(e'\).
Assume without loss of generality that $f$ and~\(e\) cross.
Note, in particular, that by the construction of $M$, no edge of $M$ can start before $w_1$ and also end after $w_4$.
Since no two edges of \(M\) cross, there is at most one other edge, say \(f'\), of \(M\) that crosses~\(e\),
in which case we pick $f'$ as a single edge.
Then \(K' = C + f + e\) is a subdivision of \(K_4\), and we obtain a subdivision \(K_M\) of \(K_4\) from \(K'\)
analogously to Case 1: for every edge \(v_iv_j \in M\setminus\{f\}\) (or \(v_iv_j \in M\setminus\{f,f'\}\)),
we replace the path \(C[v_i,v_j]\) by \(v_iv_j\).

This concludes the analysis when \(M\) is elementary.

\medskip
\noindent
\textbf{Jumbled.}
In this case we do not use \(e\) and \(e'\).
The idea here is to modify \(C\) into a cycle \(C'\subseteq C \cup M\) that contains \(M\),
and then restore some of the edges removed from \(C\) to obtain a subdivision of \(K_4\).
Recall that $M$ is easy, and hence there exist  \(a_1\), \(A_1,\ldots,A_d\), and $t$
such that~\ref{i:alternating}--\ref{i:zero-or-odd} hold.

\smallskip
\noindent
\underline{Modifying \(C\).}
Let \(A_r\) be an interval of \([h]\) for which the set \(M_r\) of edges of \(M\) that start in \(A_r\) (and end in \(A_{r+1}\)) is non-empty.
Then, by \ref{i:alternating}, no edge of \(M\) starts in \(A_{r+1}\).
Let \(Q_r\) be the shortest subpath of \(C\) that contains the vertices \(v_i\) with \(i\in A_r\cup A_{r+1}\),
and let \(R_r = Q_r\cup M_r\).
We show that there is a path \(Q_r'\subseteq R_r\) that contains \(M_r\)
and has the same end vertices as \(Q_r\).
Then \(C'\) is obtained from $C$ by replacing each~$Q_r$ by $Q_r'$.

The existence of $Q_r'$ can easily be shown by induction.
Alternatively, one can use the following construction.
Let \(s = |M_r|\) and let \(w_1, \ldots, w_{2s}\) be the vertices of \(Q_r\)
incident to edges of \(M_r\) in the order that they appear in \(Q_r\).
Note that \(w_1\) and \(w_{2s}\) are the end vertices of \(Q_r\),
and that $s$ is~odd by~\ref{i:zero-or-odd}.
By~\ref{i:crossing}, \(M_r = \{w_iw_{i+s} : i\leq s\}\).
Now, let \(Q_r'\) be the graph obtained from \(R_r\)
by removing the edges in \(Q_r[w_i,w_{i+1}]\) for odd~\(i\) with \(i \leq 2s-1\)
and removing isolated vertices (see Figure~\ref{fig:partial-path}).
Note that \(R_r\) is a subcubic graph whose vertices with degree~\(3\)
are precisely \(w_2, \ldots, w_{2s-1}\),
and that, to obtain \(Q_r'\), we remove from~\(R_r\) precisely one~edge incident to each \(w_i\).
Thus \(\Delta(Q_r') \leq 2\).
  We claim that $Q_r'$ is connected.
  For that, we prove that $Q_r'$ contains a path joining $w_i$~to~$w_{i+1}$ for each $i\in[2s-1]$.
  This is clear if $i$ is even, because the segment $Q_r[w_{i}w_{i+1}]$ has not been removed.
  For odd~$i$, on the other hand,
  the segment~$Q_r[w_{i+s}w_{i+s+1}]$ has not been removed because $s$ is~odd (and consequently~$i+s$ is even), and
  thus $w_iw_{i+s}Q_r[w_{i+s}w_{i+s+1}]w_{i+1}$ is the desired a path in~$Q_r'$.
Moreover, the end vertices of \(Q_r'\) are \(w_1\) and \(w_{2s}\) and  \(M_r\subseteq E(Q_r')\).
This concludes the construction of~$Q_r'$.

  \begin{figure}[h]
\centering
\begin{tikzpicture}[node distance=2cm,scale = .8]
% Vértices
\tikzset{black vertex/.style={circle,draw,minimum size=1mm,inner sep=0pt,outer sep=2pt,fill=black, color=black}}

\foreach \x in {1,...,18}{
    \node[black vertex] (\x) at (\x,0) {};
	\node () at (\x,-.5) {\x};
}

%u_i's
\node () at (1,-1) {$w_1$};
\node () at (2,-1) {$w_2$};
\node () at (3,-1) {$w_3$};
\node () at (5,-1) {$w_4$};
\node () at (6,-1) {$w_5$};

%w_i's
\node () at (8,-1) {$w_6$};
\node () at (10,-1) {$w_7$};
\node () at (12,-1) {$w_8$};
\node () at (16,-1) {$w_9$};
\node () at (18,-1) {$w_{10}$};

%\draw[line width = 1.5pt] (1) -- (2) -- (3) -- (4) -- (5) -- (6) -- (7) -- (8) -- (9) -- (10) -- (11) -- (12) -- (13) -- (14) -- (15) -- (16) -- (17) -- (18) -- (19) -- (20) -- (21) -- (22) -- (23) -- (24);

\draw[line width = 2.5pt] (1)  (2) -- (3) (5) -- (6) (8) -- (9) -- (10) (12) -- (13) -- (14) -- (15) -- (16)  (18);

\draw[line width = .5pt] (1) -- (2) -- (3) -- (4) -- (5) -- (6) -- (7) -- (8) -- (9) -- (10) -- (11) -- (12) -- (13) -- (14) -- (15) -- (16) -- (17) -- (18);

% Arestas curvas
\foreach \i in {8,10,12,16,18}
{
    \pgfmathtruncatemacro{\j}{(\i-6)/2}
    \ifnum\j>0
        \draw[bend right,line width = 2.5pt] (\i) to (\j);
    \fi
}

\end{tikzpicture}
\caption{Thick lines illustrate the path \(Q_r'\) that contains \(M_r\subseteq N_{6,t,r}\) for some \(t\) and \(r\).}
\label{fig:partial-path}
\end{figure}
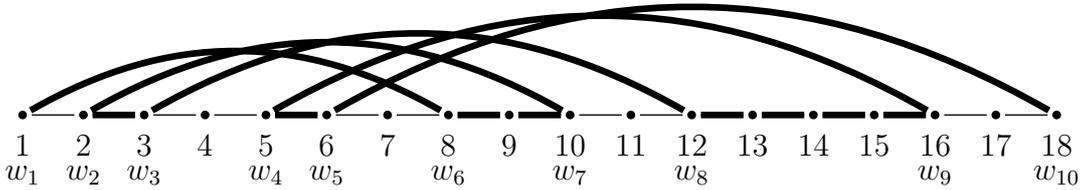

\smallskip
\noindent
\underline{Turning \(C'\) into a subdivision of \(K_4\).}
To turn \(C\) into a subdivision of \(K_4\) we only need to restore two of the removed \(Q_r[w_i,w_{i+1}]\) for some~$r$.
More precisely, since \(M\) is jumbled,
there is an interval \(A_r\) in which at least two edges of \(M\) start.
Let \(M_r\subseteq M\) be the set of edges that start in \(A_r\).
Let \(s = |M_r|\) and let \(w_1, \ldots, w_{2s}\) be the vertices of \(P\)
incident to edges of \(M_r\) as above.
Recall that the removed subpaths were \(Q_r[w_i,w_{i+1}]\) for odd~\(i\).
Moreover, recall that \(s\) is odd by~\ref{i:zero-or-odd} and thus~$s\ge 3$.
Therefore, \(Q_r[w_1,w_2]\) and \(Q_r[w_{s+2},w_{s+3}]\) were removed.
Let \(K_M = C'\cup Q_r[w_1,w_2] \cup Q_r[w_{s+2},w_{s+3}]\).
We claim that \(K_M\) is a subdivision of~\(K_4\).
Indeed, by construction of \(C'\), the order in which the vertices \(w_1,\ldots,w_{2s}\) appear
is
\[
  w_1,w_{s+1},w_{s+2},w_2,w_3,w_{s+3}, \ldots ,w_i,w_{i+s},w_{i+s+1},w_{i+1}, \ldots, w_{2s}.
\]
Let \(p(w_i)\) be the position of \(w_i\) in the order above.
In particular, we have \(p(w_1) = 1\), \(p(w_{s+2}) = 3\), \(p(w_{2}) = 4\) and \(p(w_{s+3}) = 6\).
Then, \(p(w_1) < p(w_{s+2}) < p(w_2) < p(w_{s+3})\), and hence
\(K_M\) is a subdivision of \(K_4\).
This step creates a single subdivision of~\(K_4\) for each \(M\).

\smallskip
Now, by applying the construction above on each \(M\) in \(\mathcal{M}\cup\mathcal{N}\),
we obtain a set \(\mathcal{K}\) of at most \(24h\) subdivisions of \(K_4\).
We claim that \(\calk\) separates each pair of edges in~\(H'\).
Indeed, let $v_iv_j$ and $v_{i'}v_{j'}$ be edges of~\(H'\).
Each such edge belongs to exactly one $M\in\mathcal{M}$ and exactly one $N\in\mathcal{N}$.
If they belong to different elements of \(\mathcal{M}\),
then they belong to different elements of~\(\mathcal{K}\) and are separated,
because $\mathcal{M}$ partitions~$E(H')$.
The same argument works for~\(\mathcal{N}\).
Therefore, we may assume that $i-j=i'-j'$ and $i-2 j=i'-2 j'$.
This immediately yields $j=j'$ and $i=i'$, so $v_iv_j = v_{i'}v_{j'}$.
Therefore, any two \emph{distinct} edges in $H'$ are separated by~$\mathcal{K}$, as desired.
This concludes the proof.
\end{proof}

\section{Separating into cycles}
\label{sec:cycles}

First, note that \(K_4\) can be covered by two cycles
(indeed, suppose \(V(K_4) = [4]\) and note that the cycles \(1 2 3 4 1\) and \(1 2 4 3 1\) cover it).
Hence every subdivision of~\(K_4\) can be covered by two cycles.
This means that any \(K_4\)-separating system~$\cals$
  yields a \(K_3\)-separating system of size at most \(2|\cals|\).
  And thus, by Theorem~\ref{thm:K4-separation},
  every graph on \(n\) vertices admits a \(K_3\)-separating system of size at most \(164n\),
  which verifies Conjecture~\ref{conj:edge-separation} when \(H = K_3\).
A slightly better result, Theorem~\ref{thm:cycles} (any graph on \(n\) vertices
can be separated by at most \(41 n\) edges and cycles) can be obtained
using the following theorem of Pyber~\cite{pyber}.

\begin{theorem}[\cite{pyber}]\label{thm:pyber}
  Every graph~$G$ contains \(|V(G)|-1\) cycles and edges covering~$E(G)$.
\end{theorem}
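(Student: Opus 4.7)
The plan is to proceed by induction on $n = |V(G)|$, using a block-decomposition reduction followed by a subsidiary induction for the 2-edge-connected case. If $G$ is disconnected with components $G_1, \ldots, G_c$, apply induction to each component, obtaining a total cover of size $\sum_i (|V(G_i)| - 1) = n - c \le n - 1$. For a connected $G$, the blocks $B_1, \ldots, B_k$ satisfy the identity $\sum_i (|V(B_i)| - 1) = n - 1$, a consequence of the tree structure of the block-cut tree (provable by a quick induction on $k$, using a leaf block). Bridges are trivially covered by themselves, so it suffices to cover every 2-edge-connected block $B$ with at most $|V(B)| - 1$ cycles.

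For a 2-edge-connected graph $G$ on $n \ge 3$ vertices, I would aim to find a cycle $C$ such that $G - E(C)$ admits a cover of size at most $n - 2$; adjoining $C$ would then yield a cover of $G$ of size at most $n - 1$. The easy case is when $G$ contains a vertex $v$ of degree exactly $2$: any cycle $C$ through $v$ isolates $v$ in $G - E(C)$, so by induction applied to $G - E(C) - v$ (a graph on $n - 1$ vertices) we obtain a cover of size at most $n - 2$, and adjoining $C$ gives the required bound.

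The main obstacle is the case of minimum degree at least $3$, since then no single cycle can isolate a vertex upon removal. I would handle this case via an open ear decomposition $C_0, P_1, \ldots, P_r$ of $G$. Each proper ear (with at least one internal vertex) can be closed into a cycle by appending a suitable path from the earlier subgraph, contributing one new cycle per proper ear and thus one cycle per new vertex introduced. Trivial ears (chords) would naively add one cycle each, overshooting the budget in dense graphs, so they must be combined: pairs of chords, or combinations of chords with proper ears, can often be merged into a single cycle using a shared path segment, and these mergings must exactly absorb the excess. The bound $n - 1$ is tight (for instance, $K_n$ with $n$ even achieves it via $(n-2)/2$ Hamilton cycles plus a perfect matching accounting as $n/2$ single edges), so no reduction can afford any slack; the detailed combinatorial bookkeeping required to pair chords optimally without exceeding the budget is the technical heart of the proof.
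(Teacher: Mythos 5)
First, a point of comparison that matters here: the paper does \emph{not} prove Theorem~\ref{thm:pyber}. It is imported verbatim from Pyber~\cite{pyber} and used as a black box, so there is no in-paper argument to match yours against; your proposal has to stand on its own as a proof of Pyber's theorem, and it does not.

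Your preliminary reductions are sound: the identity \(\sum_i (|V(B_i)|-1) = n-1\) over the blocks of a connected graph is standard, bridges cost one edge each, and the degree-\(2\) step works (a degree-\(2\) vertex of a \(2\)-edge-connected graph is not a cut vertex, so a cycle through both its edges exists, and you may invoke the full statement --- cycles \emph{and} edges --- on \(G - E(C) - v\)). But the entire content of the theorem sits in the case you explicitly defer, minimum degree at least~\(3\). There the ear-decomposition plan yields one cycle per ear, i.e.\ \(m - n + 1\) cycles for a \(2\)-connected graph with \(m\) edges, so to reach \(n-1\) you must absorb an excess of \(m - 2n + 2\), which is \(\Theta(n^2)\) in dense graphs. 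Saying that chords ``can often be merged'' and that the mergings ``must exactly absorb the excess'' is not bookkeeping left to the reader --- it is a restatement of the theorem: you would need to group roughly \(m\) chords into about \(n\) classes, each of which, together with segments of the previously built subgraph, closes up into a \emph{single} cycle, and you give no mechanism for producing such a grouping. That this is genuinely delicate is illustrated by Figure~\ref{fig:bad-example} of this very paper: even a matching of equal-length chords of a Hamilton cycle need not be contained in any one cycle, which is precisely why Theorem~\ref{thm:K4-separation} needs the easy-matching machinery (conditions \ref{i:alternating}--\ref{i:zero-or-odd}) rather than a naive merging argument. Pyber's actual proof in~\cite{pyber} is a substantially more involved induction and does not follow the ear/chord-pairing scheme. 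Your tightness remark about \(K_n\) for even \(n\) (\((n-2)/2\) Hamilton cycles plus a perfect matching, totalling \(n-1\)) is a correct construction, but it only shows the bound cannot be improved; it does nothing to close the gap. As it stands, the proposal is a clean reduction of the theorem to its hard case, not a proof of it.
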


The proof Theorem~\ref{thm:cycles} follows that of Theorem~\ref{thm:K4-separation}.
However, after partitioning the \(M_k\)'s and \(N_k\)'s into \(12h\) special matchings,
there is no need to consider elementary and jumbled cases,
since there is no need to find crossing edges (one proceed directly
to the ``Modifying \(C\)'' part of the argument,
in which we obtain a cycle \(C'\)).
Of course, one should also skip the step for ``Turning \(C'\) into a subdivision of \(K_4\)''.

\section{Conclusion}

In this paper we give the first steps towards showing that every graph admits
a linearly-sized separating system formed by edges and subdivisions of $K_t$, for fixed~$t$.
Perhaps these techniques can be fitted to find subdivisions of larger cliques.
If a suitable subdivision of~\(K_t\) ($t>4$) --- namely, a subdivision containing
a Hamilton cycle --- can be shown to exist for larger cliques,
then the tricks used in our proof might still be useful to produce a
$K_t$-separating system.

Conjecture~\ref{conj:edge-separation} could be weakened by allowing
a wider class of structures.
For example, one could seek a separating system
consisting of graphs that yield some fixed graph~$H$
through a series of edge contractions;
or consisting of immersions of~$H$ (for a more about immersions, see, e.g.,~\cite{meyniel1988problem}).

\section*{Acknowledgments}

We thank Marthe Bonamy for fruitful discussions and suggestions.

\medskip

\noindent
This research has been partially supported by Coordena\c cão de Aperfei\c coamento
de Pessoal de N\'\i vel Superior -- Brasil -- CAPES -- Finance Code 001.
F.~Botler is supported by CNPq {(\small 304315/2022-2)} and
CAPES {(\small 88887.878880/2023-00)}.
T Naia was supported by the Grant PID2020-113082GB-I00
funded by MICIU/AEI/10.13039/501100011033 and by Spanish
State Research Agency, through the Severo Ochoa and Mar\'ia de
Maeztu Program for Centers and Units of Excellence in R\&D
(CEX2020-001084-M).
CNPq is the National Council for Scientific and Technological Development of Brazil.

\bibliographystyle{amsplain}

{\small
  \vspace{-12pt}
\providecommand{\bysame}{\leavevmode\hbox to3em{\hrulefill}\thinspace}
\providecommand{\MR}{\relax\ifhmode\unskip\space\fi MR }
% \MRhref is called by the amsart/book/proc definition of \MR.
\providecommand{\MRhref}[2]{%
  \href{http://www.ams.org/mathscinet-getitem?mr=#1}{#2}
}
\providecommand{\href}[2]{#2}

}
\end{document}